\theoremstyle{definition}
\newtheorem{defn}{Definition}
\newtheorem{rem}{Remark}
\newtheorem{thm}{Theorem}
\newtheorem{ex}{Example}
\newtheorem{cor}{Corollary}
\newcommand{\norm}[1]{\Vert#1\Vert}
\newcommand{\abs}[1]{\left\vert#1\right\vert}
\newcommand{\set}[1]{\left\{#1\right\}}
\newcommand{\Real}{\mathbb R}
\newcommand{\eps}{\varepsilon}
\renewcommand{\phi}{\varphi}
\renewcommand{\subset}{\subseteq}
\newcommand{\dom}{\text{dom}\,}
\newcommand{\F}{\mathcal{F}}
\newcommand{\G}{\mathcal{G}}
\newcommand{\M}{\mathcal{M}}
\newcommand{\C}{\mathcal{C}}
\newcommand{\W}{\mathcal{W}}
\newcommand{\K}{\mathcal{K}}
\newcommand{\KL}{\mathcal{KL}}
\newcommand{\D}{\mathcal{D}}
\newcommand{\A}{\mathcal{A}}
\newcommand{\AD}{\mathcal{A}^{\Delta}}
\newcommand{\Atj}{\mathcal{A}_{[t,j]}^{\Delta}}
\newcommand{\HM}{\mathcal{H_M}}
\newcommand{\ra}{\rightarrow}
\newcommand{\raa}{\rightrightarrows}
\newcommand{\Z}{\mathbb{Z}}
\newcommand{\MD}{\mathcal{M}^{\Delta}}
\newcommand{\HMD}{\mathcal{H}_{\mathcal{M}}^{\Delta}}
\newcommand{\SHMD}{\mathcal{S}_{\mathcal{H}_\mathcal{M}^{\Delta}}}
\begin{document}

\title{\Large\bfseries Lyapunov-based sufficient conditions for stability of hybrid systems with memory}

\author{Jun Liu and Andrew R. Teel% <-this % stops a space
\thanks{This work is supported, in part, by Royal Society grant IE130106, EU FP7 grant PCIG13-GA-2013-617377, US AFOSR grants FA9550-12-1-0127 and FA9550-15-1-0155, and US NSF grant ECCS-1232035.}
\thanks{J. Liu is currently with the Department of Automatic Control and Systems Engineering, University of Sheffield, Sheffield S1 3JD, United Kingdom. As of September 2015, he is with the Department of Applied Mathematics, University of Waterloo, Waterloo, Ontario N2L 3G1, Canada (e-mail: j.liu@uwaterloo.ca).}
\thanks{A.~R.~Teel is with the Center for Control, Dynamical Systems, and Computation and the Department of Electrical and Computer Engineering, University of California, Santa Barbara, CA 93106, USA (e-mail: teel@ece.ucsb.edu).}}

\maketitle

\begin{abstract}

Hybrid systems with memory are dynamical systems exhibiting both hybrid and delay phenomena.
In this note, we study the asymptotic stability of hybrid systems with memory using generalized concepts of solutions. These generalized solutions, motivated by studying robustness and well-posedness of such systems, are defined on hybrid time domains and parameterized by both continuous and discrete time. We establish Lyapunov-based sufficient conditions for asymptotic stability using both Lyapunov-Razumikhin functions and Lyapunov-Krasovskii functionals. Examples are provided to illustrate these conditions.

\end{abstract}

\begin{IEEEkeywords}

Hybrid systems, time delay, functional inclusions, generalized solutions, stability.

\end{IEEEkeywords}

\IEEEpeerreviewmaketitle

\section{Introduction}

While delay phenomena and hybrid dynamics are ubiquitous in both nature and engineering applications, the interplay between these two is particularly pronounced in control systems, where the use of hybrid control algorithms has increasingly gained popularity and delays are often inevitable, e.g., in situations where the control loop is closed over a network.

Hybrid systems with memory refer to dynamical systems exhibiting both hybrid and delay phenomena. Such systems have attracted considerable attention and asymptotic stability for hybrid systems with delays has been addressed in the past in various settings (see, e.g., \cite{chen2009input,liu2001uniform,liu2006stability,liu2011generalized,liu2011input,yan2008stability,yuan2003uniform}). All these results, however, have been established using the classical notion of solutions, which are typically considered to be piecewise continuous and parameterized by the continuous time alone. Meanwhile, generalized solutions of hybrid inclusions, defined on hybrid time domains and parameterized by both continuous and discrete times, have been proposed and proven effective for the robust stability analysis of hybrid systems without memory \cite{goebel2004hybrid,goebel2012hybrid,sanfelice2008generalized}.

Recent work in \cite{liu2012generalized,liu2014hybrid-ifac} proposed a framework that allows to study hybrid systems with delays through generalized solutions. One of the major different ideas in \cite{liu2012generalized,liu2014hybrid-ifac} is to consider a phase space equipped with  the graphical convergence topology, instead of using the conventional choice of phase space of piecewise continuous functions equipped with uniform convergence topology. The latter is not well-suited to handle discontinuities caused by jumps in hybrid systems, especially when structural properties of the solutions are concerned. By using tools from functional differential inclusions, basic existence and well-posedness results have been established \cite{liu2014hybrid-siam}. As a consequence of well-posedness, it is also proved in \cite{liu2014hybrid-siam} that $\KL$ pre-asymptotic stability is robust for well-posed hybrid systems with memory.

The main contribution of this note is to provide Lyapunov-based sufficient conditions for studying asymptotic stability of hybrid systems with memory using generalized solutions. We prove two sets of such conditions, one using Lyapunov-Razumikhin functions and the other using Lyapunov-Krasovskii functionals. These results extend to the hybrid setting the classical stability results for functional differential equations \cite{hale1993introduction} and more recent results on delay difference equations or inclusions (e.g., \cite{liu2007razumikhin,gielen2013tractable,gielen2013necessary}). They also extend some preliminary results on this topic found in \cite{liu2014hybrid-ifac}.

\section{Preliminaries}\label{sec:pre}

\emph{Notation:} $\Real^n$ denotes the $n$-dimensional Euclidean space with its norm denoted by  $\abs{\cdot}$; $\mathbb{Z}$ denotes the set of all integers; $\Real_{\ge 0}= [0,\infty)$, $\Real_{\le 0}=(-\infty,0]$, $\Z_{\ge 0}=\set{0,\,1,\,2,\,\cdots}$, and $\Z_{\le 0}=\set{0,\,-1,\,-2,\,\cdots}$. A continuous function $\alpha:\,\Real_{\ge 0} \rightarrow \Real_{\ge 0}$ is said to belong to class $\mathcal{K}$ if it is strictly increasing and satisfies $\alpha(0) = 0$. A class $\mathcal{K}$ function is said to belong to class $\mathcal{K}_{\infty}$ if it further satisfies $\lim_{s \rightarrow \infty} \alpha(s) = \infty$. A continuous function $\beta:\,\Real_{\ge 0}\times\Real_{\ge 0} \rightarrow \Real_{\ge 0}$ is said to belong to class $\mathcal{KL}$ if, for each fixed $s$, the function $r\mapsto\beta(r,s)$ belongs to class $\mathcal{K}$ and, for each fixed $r$, the function $s\mapsto\beta(r,s)$ is decreasing with respect to $s$ and satisfies $\beta(r,s) \rightarrow 0$ as $s \rightarrow \infty$.

\subsection{Hybrid systems with memory}

We start with the definition of hybrid time domains and hybrid arcs
\cite{goebel2012hybrid,goebel2006solutions} for hybrid systems and generalize them in order to define hybrid systems with memory. The following definitions first appeared in \cite{liu2012generalized,liu2014hybrid-ifac}.

\begin{defn}
Consider a subset $E\subset \Real\times \Z$ with $E=E_{\ge 0}\cup E_{\le 0}$, where $E_{\ge 0}:=(\Real_{\ge 0}\times \Z_{\ge 0})\cap E$ and $E_{\le 0}:=(\Real_{\le 0}\times \Z_{\le 0})\cap E$. The set $E$ is called a \emph{compact hybrid time domain with memory} if
$
E_{\ge 0}=\bigcup_{j=0}^{J-1}([t_j,t_{j+1}],j)
$
and
$
E_{\le 0}=\bigcup_{k=1}^{K}([s_k,s_{k-1}],-k+1)
$
for some finite sequence of times $s_{K}\le\cdots\le s_1\le s_0=0=t_0\le t_1\le \cdots\le t_J$. It is called a \emph{hybrid time domain with memory} if, for all $(T,J)\in E_{\ge 0}$ and all $(S,K)\in \Real_{\ge 0}\times \Z_{\ge 0}$,
$
(E_{\ge 0}\cap ([0,T]\times\set{0,\,1,\,\cdots,\,J})) \cup (E_{\le 0}\cap ([-S,0]\times\set{-K,\,-K+1,\,\cdots,\,0}))
$
is a compact hybrid time domain with memory. The set $E_{\le 0}$ is called a \emph{hybrid memory domain}.
\end{defn}

\begin{defn}
A \emph{hybrid arc with memory} consists of a hybrid time domain with memory, denoted by $\dom x$, and a function $x:\,\dom x\ra \Real^n$ such that $x(\cdot,j)$ is locally absolutely continuous on $I^{j}=\set{t:\,(t,j)\in\dom\,x}$ for each $j\in\Z$ such that $I^{j}$ has nonempty interior. In particular, a hybrid arc $x$ with memory is called a \emph{hybrid memory arc} if $\dom x\subset\Real_{\le 0}\times\Z_{\le 0}$. We shall simply use the term \emph{hybrid arc} if we do not have to distinguish between the above two hybrid arcs. We write $\dom_{\ge 0}(x):=\dom x\cap(\Real_{\ge 0}\times \Z_{\ge 0})$ and  $\dom_{\le 0}(x):=\dom x\cap(\Real_{\le 0}\times \Z_{\le 0})$.
\end{defn}

We shall use $\M$ to denote the collection of all hybrid memory arcs. Moreover, given $\Delta\in[0,\infty)$, we denote by $\MD$ the collection of hybrid memory arcs $\phi$ satisfying the following two conditions: (1) $s+k\ge -\Delta-1$ for all $(s,k)\in\dom\phi$; and (2) there exists $(s',k')\in \dom\phi$ such that $s'+k'\le-\Delta$. The constant $\Delta$ roughly captures the size of memory for a hybrid system. The above two conditions ensure that the memory size is at least $\Delta$ and at most $\Delta+1$. We allow this variability in order to capture certain graphical convergence properties of solutions related to the well-posedness and robustness of hybrid systems with memory (see \cite{liu2014hybrid-siam} for more details). 

Given a hybrid arc $x$, we define an operator $\AD_{[\cdot,\cdot]}x:\,\dom_{\ge 0}(x)\ra \MD$ by
$
\AD_{[t,j]}x(s,k)=x(t+s,j+k),
$
for all $(s,k)\in\dom (\AD_{[t,j]}x)$, where $(t,j)\in\dom_{\ge 0}(x)$ and
\begin{align*}
\dom (\AD_{[t,j]} x)&:=\Big\{(s,k)\in \Real_{\le 0}\times \Z_{\le 0}:\\
&\qquad\,(t+s,j+k)\in\dom x,\;s+k\ge -\Delta_{\inf}\Big\},
\end{align*}
$$
\Delta_{\inf}:=\inf\Big\{\delta\ge\Delta:\,\exists(t+s,j+k)\in\dom x\;\text{s.t.}\;   s+k=-\delta\Big\}.
$$
It follows that if $\AD_{[0,0]}x\in \MD$, then $\AD_{[t,j]}x\in \MD$ for any $(t,j)\in \dom_{\ge 0}(x)$.

\begin{defn}
A \emph{hybrid system with memory of size $\Delta$} is defined by a 4-tuple $\HMD=(\C,\F,\D,\G)$:
\begin{itemize}
\item a set $\C\subset\MD$, called the \emph{flow set};
\item a set-valued functional $\F:\MD\raa \Real^n$, called the \emph{flow map};
\item a set $\D\subset\MD$, called the \emph{jump set};
\item a set-valued functional $\G:\MD\raa \Real^n$, called the \emph{jump map}.
\end{itemize}
\end{defn}

Given a hybrid memory arc $\phi\in\MD$ and $g\in\Real^n$, we define $\phi_g^+$ be a hybrid memory arc in $\MD$ satisfying $\phi_g^+(0,0)  = g$ and $\phi_g^+(s,k-1) = \phi(s,k)$ for all $(s,k)\in\dom \phi$. Furthermore, we define $\G^+(\D):=\set{\phi_g:\,g\in\G(\phi),\;\phi\in \D}$. Intuitively, $\phi_g^+$ is the hybrid memory arc following $\phi$ after taking a jump of value $g$; $\G^+(\D)$ is the set of hybrid memory arcs that can be obtained by applying the functional $\G$ on the jump set $\D$.

\begin{defn}\label{def:sol}
 A hybrid arc is a \emph{solution to the hybrid system $\HMD$} if the initial data $\AD_{[0,0]}x\in \C\cup \D$ and:
 \begin{enumerate}[(S1)]
 \item for all $j\in \Z_{\ge 0}$ and almost all $t$ such that $(t,j)\in\dom_{\ge 0} (x)$,
 \begin{equation}
 \Atj x\in \C,\quad \dot{x}(t,j)\in \F(\Atj x),
 \end{equation}
 \item for all $(t,j)\in\dom_{\ge 0} (x)$ such that $(t,j+1)\in\dom_{\ge 0} (x)$,
 \begin{equation}
 \Atj x\in \D,\quad x(t,j+1)\in \G(\Atj x).
 \end{equation}
 \end{enumerate}
The solution $x$ is called \emph{nontrivial} if $\dom_{\ge 0}(x)$ has at least two points. It is called \emph{complete} if $\dom_{\ge 0}(x)$ is unbounded. It is called \emph{maximal} if there does not exist another solution $y$ to $\HMD$ such that $\dom x$ is a proper subset of $\dom y$ and $x(t,j)=y(t,j)$ for all $(t,j)\in\dom x$. The set of all maximal solutions to $\HMD$ starting from some initial data $\phi\in\MD$ is denoted by $\SHMD(\phi)$.
\end{defn}

We refer the readers to \cite{liu2012generalized,liu2014hybrid-ifac,liu2014hybrid-siam} for existence of generalized solutions and well-posedness for hybrid systems with memory. The main results of this paper are on $\KL$ pre-asymptotic stability with respect to a closed set in $\Real^n$ for hybrid systems with memory. The definition for $\KL$ pre-asymptotic stability is given below.

\begin{defn}\label{def:stability}
Let $\W\subset\Real^n$ be a closed set. The set $\W$ is said to be \emph{$\KL$ pre-asymptotically stable} for $\HMD$ if there exists a $\KL$ function $\beta$ such that any solution $x$ to $\HMD$ satisfies
\begin{align*}
\abs{x(t,j)}_{\W}&\le \beta(\norm{\AD_{[0,0]}x}_{\W},t+j),\quad\forall (t,j)\in\dom_{\ge 0}(x),
\end{align*}
where $\abs{z}_{\W}:=\inf_{y\in\W}\abs{y-z}$ for $z\in\Real^n$ and
$\norm{\phi}_{\W}:=\sup_{(s,k)\in\dom\phi\atop s+k\ge -\Delta-1}\abs{\phi(s,k)}_{\W}$
for $\phi\in\MD$.
\end{defn}

\section{Lyapunov conditions for asymptotic stability}\label{sec:stability}

In this section, we present the main results on Lyapunov-based sufficient conditions for the asymptotic stability of hybrid systems with memory. We provide two sets of conditions, one in terms of Lyapunov-Razumikhin functions and the other using Lyapunov--Krasovskii functionals.

\subsection{Sufficient conditions by Lyapunov-Razumikhin functions}

Razumikhin theorems \cite{razumikhin1956stability} have been a very useful tool for the stability analysis of delay or functional differential equations (see, e.g., \cite[Theorem 4.2, Chapter 5]{hale1993introduction}). Typical stability criteria in a Razumikhin-type theorem involve a positive definite Lyapunov function whose derivative along solutions is negative definite only if the current value of the Lyapunov function exceeds certain thresholds relative to the past values of the Lyapunov function over a delay period. 

The following result, which extends a preliminary result found in \cite{liu2014hybrid-ifac}, gives a general Razumikhin-type theorem for hybrid systems with memory.

\begin{thm}\label{thm:stability}
Let $\HMD=(\C,\F,\D,\G)$ be a hybrid system with finite memory (i.e., $\Delta<\infty$) and let $\W\subset\Real^n$ be a closed set. If there exists a continuously differentiable function $V:\,\Real^n\ra\Real_{\ge 0}$, $\K_\infty$ functions $\alpha_i$ ($i=1,2$), a positive definite and continuous function $\alpha_3:\,\Real_{\ge 0}\ra\Real_{\ge 0}$, and continuous functions $p:\,\Real_{\ge 0}\ra\Real_{\ge 0}$ and $\rho:\,\Real_{\ge 0}\ra\Real_{\ge 0}$ with $p(r)>r$ and $\rho(r)<r$ for all $r>0$ such that the following hold:
\begin{itemize}
\item[(i)] $\alpha_1(\abs{\phi(0,0)}_{\W})\le V(\phi(0,0))\le \alpha_2(\abs{\phi(0,0)}_{\W})$ for all $\phi\in \C\cup \D\cup\G^+(\D)$;
\item[(ii)] $\nabla V(\phi(0,0))\cdot f\le -\alpha_3 (V(\phi(0,0)))$ for all $\phi\in \C$ such that $p(V(\phi(0,0)))\ge \overline{V}(\phi)$ and all $f\in\F(\phi)$;
\item[(iii)] $V(g)\le \rho(\overline{V}(\phi))$ for all $\phi\in\D$ and all $g\in\G(\phi)$,
\end{itemize}
where $\overline{V}(\phi):=\max_{(s,k)\in\dom\phi\atop s+k\ge -\Delta-1} V(\phi(s,k))$, then $\W$ is $\KL$ pre-asymptotically stable for $\HMD$.
\end{thm}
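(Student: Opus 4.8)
The plan is to run a hybrid Razumikhin argument in three stages along a fixed solution: a comparison (boundedness) step that propagates $V$, an iterated contraction step that drives $\overline{V}$ to zero, and an assembly step turning a uniform stability bound together with a uniform pre-attractivity bound into a single $\KL$ estimate. Fix a solution $x$ and write $v(t,j):=V(x(t,j))$. From (S1)--(S2) (and continuity of $V$ and of $x$ along flows, which covers the at most countably many exceptional instants) every $\AD_{[t,j]}x$ with $(t,j)\in\dom_{\ge 0}(x)$ lies in $\C\cup\D\cup\G^+(\D)$, so (i) gives $\alpha_1(\abs{x(t,j)}_\W)\le v(t,j)\le\alpha_2(\abs{x(t,j)}_\W)$ there; in particular $\overline{V}(\AD_{[0,0]}x)\le\alpha_2(\norm{\AD_{[0,0]}x}_\W)=:\mu_0$, and, since the hybrid memory domain of a solution is ``connected'' in hybrid time and reaches back by at most $\Delta+1$, every memory point of $x$ already lies in the window of $(0,0)$, hence $v\le\mu_0$ at all hybrid times $\ge-(\Delta+1)$ that belong to $\dom x$.

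\textbf{Stage 1 (boundedness).} I would show $v(t,j)\le\mu_0$ for all $(t,j)\in\dom_{\ge 0}(x)$ by the ``no first upcrossing'' argument. A jump cannot push $v$ above $\mu_0$: if the memory window of $(t,j)$ is already bounded by $\mu_0$, then by (iii) and $\rho(r)<r$ one has $v(t,j+1)=V(g)\le\rho(\overline{V}(\AD_{[t,j]}x))<\overline{V}(\AD_{[t,j]}x)\le\mu_0$. During flow, at a hypothetical first instant $(t^*,j^*)$ with $v(t^*,j^*)=\mu_0$ reached from below, the entire memory window of $(t^*,j^*)$ consists of points lying either in $\dom_{\ge 0}(x)$ (hence $\le\mu_0$ prior to the crossing) or in the initial memory (hence $\le\mu_0$ by definition of $\mu_0$), so $\overline{V}(\AD_{[t^*,j^*]}x)=\mu_0\le p(\mu_0)$ and (ii) forces $\dot v(t^*,j^*)\le-\alpha_3(\mu_0)<0$, a contradiction. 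This already yields uniform stability of $\W$, via $\abs{x(t,j)}_\W\le\alpha_1^{-1}(\alpha_2(\norm{\AD_{[0,0]}x}_\W))$.

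\textbf{Stage 2 (iterated contraction).} The heart of the proof is a one-window lemma: if $v\le\mu$ at every hybrid time $\ge T-(\Delta+1)$ for some $\mu>0$, then there are $\nu=\nu(\mu)\in(0,\mu)$ and $\delta=\delta(\mu)>0$, depending only on $\mu$ and the data, with $v\le\nu$ at every hybrid time $\ge T+\delta$. To prove it, put $\ell:=\sup\{r\ge 0:p(r)<\mu\}$ and $\widehat{\rho}(\mu):=\sup_{0\le s\le\mu}\rho(s)$; continuity of $p,\rho$ together with $p(r)>r$ and $\rho(r)<r$ forces $\ell<\mu$ and $\widehat{\rho}(\mu)<\mu$, so one may fix $\nu\in(\max\{\ell,\widehat{\rho}(\mu)\},\mu)$. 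For $t+j\ge T$ one has $\overline{V}(\AD_{[t,j]}x)\le\mu$, so whenever $v(t,j)\ge\nu>\ell$ the Razumikhin inequality $p(v(t,j))\ge\mu\ge\overline{V}(\AD_{[t,j]}x)$ holds and (ii) gives $\dot v\le-c$ during flow, with $c:=\min_{[\nu,\mu]}\alpha_3>0$; meanwhile any jump sends $v$ below $\widehat{\rho}(\mu)<\nu$. Hence $v$ enters $[0,\nu]$ within $(\mu-\nu)/c+1$ hybrid time after $T$ (the first jump, if any, does it; otherwise $\nu$ is crossed after at most $(\mu-\nu)/c$ units of flow), and it then stays there: an upcrossing of $\nu$ during flow is blocked exactly as in Stage 1, and jumps keep $v\le\widehat{\rho}(\mu)<\nu$. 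After a further $\Delta+1$ units the whole memory window lies where $v\le\nu$, which proves the lemma with $\delta=(\mu-\nu)/c+\Delta+2$. Iterating from $(\mu_0,T_0):=(\alpha_2(\norm{\AD_{[0,0]}x}_\W),0)$ produces $\mu_{k+1}=\nu(\mu_k)$ and $T_{k+1}=T_k+\delta(\mu_k)$, all functions of $\norm{\AD_{[0,0]}x}_\W$ and the data alone, with $v(t,j)\le\mu_k$ for $t+j\ge T_k$. Since $\mu_k$ is strictly decreasing and $T_{k+1}-T_k\ge\Delta+1$, a short limit argument (again using continuity of $p,\rho$ and $p(r)>r$, $\rho(r)<r$ at the limit point) shows $\mu_k\to 0$ and $T_k\to\infty$. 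This gives uniform pre-attractivity: given $r$ and $\eps$, pick $k$ with $\mu_k\le\alpha_1(\eps)$ for $\norm{\AD_{[0,0]}x}_\W\le r$; then $\abs{x(t,j)}_\W\le\eps$ for all $t+j\ge T_k$.

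\textbf{Stage 3 (assembly).} Uniform stability (Stage 1) together with uniform pre-attractivity (Stage 2) is equivalent to the existence of a single $\KL$ function $\beta$ with $\abs{x(t,j)}_\W\le\beta(\norm{\AD_{[0,0]}x}_\W,t+j)$ on $\dom_{\ge 0}(x)$; I would either invoke the $\eps$--$\delta$/$\KL$ characterization of pre-asymptotic stability for hybrid systems with memory \cite{goebel2012hybrid,liu2014hybrid-siam} or construct $\beta$ directly, by majorizing the step data $(r,s)\mapsto\alpha_1^{-1}(\mu_{k(r,s)}(r))$ by a $\KL$ function. The two places where genuine care is needed---and where I expect the main obstacle to lie---are the bookkeeping of which domain points fall in the memory window of a given $(t,j)$ (so that the bound $\overline{V}\le\mu$ is available precisely at the hybrid times where the Razumikhin inequality is invoked), and the verification that the iterated thresholds $\mu_k$ genuinely tend to $0$ for $p$ and $\rho$ that are merely continuous and not assumed monotone; by contrast the flow/jump interleaving in the contraction estimate is comparatively harmless because jumps only ever help.
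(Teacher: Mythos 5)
Your plan follows the same Razumikhin route as the paper: a boundedness step driven by the memory functional $\overline{V}$, an iterated ``decrease by one notch per window of length $\ge\Delta+1$'' step for attractivity, and a standard assembly of the $\KL$ estimate. The packaging of the attractivity step differs: you prove a one-window contraction lemma built from $\ell(\mu)=\sup\{r\ge 0:\,p(r)<\mu\}$ and $\widehat{\rho}(\mu)=\max_{[0,\mu]}\rho$ and then need a limit argument to show $\mu_k\to 0$; this does go through, but only if $\nu(\mu)$ is a fixed canonical selection (e.g.\ the midpoint of $(\max\{\ell(\mu),\widehat{\rho}(\mu)\},\mu)$) and if you start the iteration from $\mu_0=\alpha_2(r)$ rather than $\alpha_2(\norm{\AD_{[0,0]}x}_{\W})$, so that the times $T_k$ are uniform over all solutions with $\norm{\AD_{[0,0]}x}_{\W}\le r$. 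The paper avoids both issues by fixing the target $\eps$ first and extracting uniform additive decrements $a$ and $\gamma$ on $[\eps,\alpha_2(\eta)]$, so that an explicitly finite number $N$ of stages suffices and no limiting argument is needed; your lemma is more modular, the paper's induction is lighter on uniformity bookkeeping.

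The one step that does not stand as written is Stage 1 (the ``no first upcrossing'' argument). First, $v(t,j)=V(x(t,j))$ is only locally absolutely continuous along flows, so a sign statement about $\dot v$ at the single instant $(t^*,j^*)$ is not available, and ``reaching $\mu_0$ from below'' is not in itself contradictory (indeed $v$ may start at $\mu_0$); what must be excluded is that $v$ \emph{exceeds} $\mu_0$. Second, and more substantively, once $v$ is above the threshold the Razumikhin premise $p(v)\ge\overline{V}(\AD_{[t,j]}x)$ is not automatic: $p$ is only continuous, not monotone, and $\overline{V}$ tracks the running maximum of $v$, which can exceed $p(v)$ at the current point (e.g.\ after an overshoot followed by a partial descent), so condition (ii) cannot simply be invoked ``above $\mu_0$''. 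This is precisely the point where the paper's proof does its real work: it fixes $\eps>0$, introduces $\eta=\min_{u\in[V+\eps/2,V+\eps]}(p(u)-u)>0$, and verifies the premise on a whole subinterval $[\bar s-\delta,\bar s]$ (using that the running maximum stays below $V+\eps$ up to the first crossing) before integrating the a.e.\ bound from (ii) to get a contradiction. Your Stage 2 invariance argument is in fact robust exactly because there the memory bound $\mu$ is a fixed prior constant and $\nu>\ell(\mu)$ gives the premise pointwise wherever $v\ge\nu$; the cleanest repair of Stage 1 is to reuse that mechanism with a margin, e.g.\ show $v\le\mu_0+\eps$ for every small $\eps$ with $\eps<\tfrac12(p(\mu_0)-\mu_0)$, on the first-exceedance stretch of which $p(v)\ge v+\min_{[\mu_0,\mu_0+\eps]}(p(u)-u)>\mu_0+\eps\ge\overline{V}$, and then integrate. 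With that repair (and the two uniformity fixes above), your argument is correct and essentially parallel to the paper's.
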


\begin{proof}
Let $x$ be a solution to $\HMD$. We first show that $\overline{V}(\AD_{[t,j]}x)$ is non-increasing for $(t,j)\in\dom x$. If both $(t,j)\in\dom x$ and $(t,j+1)\in\dom x$, we have
$
V(x(t,j+1))\le \rho(\overline{V}(\AD_{[t,j]}x)\le \overline{V}(\AD_{[t,j]}x),
$
which implies $\overline{V}(\AD_{[t,j+1]}x)\le \overline{V}(\AD_{[t,j]}x)$. If $(t+s,j)\in\dom x$ for all $s\in [0,h]$ for some small $h>0$, we consider two cases: (a) $V(x(t,j))=\overline{V}(\AD_{[t,j]}x)$, and (b) $V(x(t,j))<\overline{V}(\AD_{[t,j]}x)$. If (a) holds, we claim that $V(x(t+s,j))\le V(x(t,j))$ for all $s\in [0,h]$. This claim would imply $\overline{V}(\AD_{[t+s,j]}x)\le \overline{V}(\AD_{[t,j]}x)$ for all $s\in [0,h]$. We prove this by showing that, for any fixed $\eps>0$, $V(x(t+s,j))<V(x(t,j))+\eps$ for all $s\in [0,h]$. Suppose this is not the case. Then let $\bar{s}:=\inf\set{s\in [0,h]:\,V(x(t+s,j))\ge V(x(t,j))+\eps}$. It follows that $V(x(t+\bar{s},j))=V(x(t,j))+\eps$ and $V(x(t+s,j))<V(x(t,j))+\eps$ for all $s\in [0,\bar{s})$. Using continuity of $V(x(t+s,j))$ with respect to $s\in[0,h]$, there exists $\underline{s}\in[0,\bar{s}]$ such that $V(x(t+s,j))\ge V(x(t,j))+\frac{\eps}{2}$ for all $s\in [\underline{s},\bar{s}]$. Let $$\eta=\min_{u\in[V(x(t,j))+\frac{\eps}{2},V(x(t,j))+\eps]}\set{p(u)-u}.$$
Then $\eta>0$. At $s=\bar{s}$, we have
\begin{align*}
p(V(x(t+\bar{s},j)))&=p(V(x(t,j))+\eps)\\
&\ge V(x(t,j))+\eps+\eta.
\end{align*}
It follows from the continuity of $p(V(x(t+s,j)))$ with respect to $s$ that there exists some small $\delta>0$ such that $\bar{s}-\delta\in[\underline{s},\bar{s})$ and
\begin{align*}
p(V(x(t+s,j)))&\ge V(x(t,j))+\eps+\frac{\eta}{2}\\
&> \overline{V}(\AD_{[t+s,j]}x),\quad\forall s\in [\bar{s}-\delta,\bar{s}].
\end{align*}
Condition (ii) of the theorem implies that
$$
\frac{dV(x(t+s,j))}{dt}\le -\alpha_3(V(x(t+s,j)))<0,
$$
for almost all $s\in [\bar{s}-\delta,\bar{s}]$. It follows that $V(x(t+\bar{s},j))<V(x(t+\bar{s}-\delta,j))<V(x(t,j))+\eps$, which contradicts the definition of $\bar{s}$. If (b) holds, it follows from the continuity of $x(t+s,j)$ on $[0,h]$ that, if $h$ is sufficiently small, then $\overline{V}(\AD_{[t+s,j]}x)\le \overline{V}(\AD_{[t,j]}x)$ for all $s\in [0,h]$. We have proved that $\overline{V}(\AD_{[t,j]}x)$ is non-increasing for $(t,j)\in\dom x$.

Now consider any solution $x$ to $\HMD$. It follows from condition (i) that
\begin{equation}\label{eq:stability}
V(x(t,j))\le \overline{V}(\AD_{[t,j]}x)\le \overline{V}(\AD_{[0,0]}x) \le \alpha_2(\norm{\AD_{[0,0]}x}_{\W}),
\end{equation}
for all $(t,j)\in\dom_{\ge 0}(x)$.

Fix any $\eta>0$ and consider any solution $x$ to $\HMD$ with $\norm{\AD_{[0,0]}x}_{\W}\le \eta$. It follows from (\ref{eq:stability}) that $V(x(t,j))\le \alpha_2(\eta)$ for all $(t,j)\in\dom_{\ge 0}(x)$. We further show that for all $\eps>0$, there exists some $T>0$ such that $V(x(t,j))\le \eps$ for all $(t,j)\in\dom x$ with $t+j\ge T$. Without loss of generality, assume $\eps<\alpha_2(\eta)$. Define
$$
\gamma:=\min\set{\min_{s\in [\eps,\alpha_2(\eta)]}\alpha_3(\eps),\min_{s\in [\eps,\alpha_2(\eta)]}\set{s-\rho(s)}}
$$
and
$$
a:=\min\set{\min_{s\in [\eps,\alpha_2(\eta)]}\set{p(s)-s},\min_{s\in [\eps,\alpha_2(\eta)]}\set{s-\rho(s)}}.
$$
Let $N$ be the smallest integer such that $\eps+Na\ge \alpha_2(\eta)$. Both $\gamma$ and $a$ are positive, because $\eps>0$, $p(s)>s$, and $\rho(s)<s$ for all $s>0$.

We claim that there exists $T_1$ such that $V(x(t,j))\le \eps+(N-1)a$ for all $(t,j)\in\dom x$ with $t+j\ge T_1$. We prove this in two steps.
\begin{itemize}
\item[(A)] First, we show that  $V(x(t,j))\le \eps+(N-1)a$ holds for some $(t,j)\in\dom x$ and $t+j\ge 0$.
\item[(B)] Second, we show that once $V(x(t,j))\le \eps+(N-1)a$ for some $(t,j)\in\dom x$ and $t+j\ge 0$, this holds for all $(t,j)\in\dom x$ beyond this instant.
\end{itemize}
Suppose (A) is not true, then we have $V(x(t,j))>\eps+(N-1)a$ holds for all $(t,j)\in\dom x$ and $t+j\ge 0$. It follows that for all $(t,j)\in \dom x$ with $t+j\ge 0$, we have
$$
p(V(x(t,j)))\ge V(x(t,j))+a > \eps+Na \ge \alpha_2(\eta)= \overline{V}(\AD_{[t,j]}x).
$$
If $I^j:=\set{t:\,(t,j)\in \dom x}$ has non-empty interior, we have from condition (ii) that
\begin{equation}\label{eq:decay2}
\frac{dV(x(t,j))}{dt} \le - \alpha_3(V(x(t,j))) \le -\gamma.
\end{equation}
If both $(t,j)\in\dom x$ and $(t,j+1)\in\dom x$, we have from condition (iii) that
\begin{equation}\label{eq:decay1}
V(x(t,j+1)) - V(x(t,j)) \le \rho(\overline{V}(\AD_{[t,j]}x))-V(x(t,j))\le -\gamma.
\end{equation}

Combining (\ref{eq:decay1}) and (\ref{eq:decay2}) gives
$
V(x(t,j))\le V(x(0,0)) -\gamma(t+j),
$
which holds for all $(t,j)\in\dom x$ with $t+j\ge 0$. This would lead to a contradiction if $t+j$ is sufficiently large.

Suppose (B) is not true. Then there exists $(t',j')\in \dom x$ such that
$$
t'+j'=\inf\set{s+k\ge t+j:\,V(x(s,k))>\eps+(N-1)a}.
$$
We consider two cases.

If $V(x(t',j'))>\eps+(N-1)a$, it must be that $(t',j'-1)\in\dom x$ and $V(x(t',j'-1))\le \eps+(N-1)a$. This would lead to a contradiction that
$$
V(x(t',j'))\le \rho(\overline{V}(\AD_{[t',j'-1]}x)) \le \eps+(N-1)a,
$$
which holds in either case of $\overline{V}(\AD_{[t',j'-1]}x)\ge \eps+(N-1)a$ or $\overline{V}(\AD_{[t',j'-1]}x)\le \eps+(N-1)a$.
Indeed, if the former holds, then $\rho(\overline{V}(\AD_{[t',j'-1]}x))\le \overline{V}(\AD_{[t',j'-1]}x)-a \le \eps+(N-1)a$; if the latter holds, then $\rho(\overline{V}(\AD_{[t',j'-1]}x))\le \overline{V}(\AD_{[t',j'-1]}x) \le \eps+(N-1)a$.

If $V(x(t',j'))=\eps+(N-1)a$, it must hold that $(t'+s,j')\in\dom x$ for all $s\in [0,h]$ for some small $h>0$. Since $p(V(x(t',j')))\ge V(x(t',j'))\ge V(x(t',j'))+a  = \eps+Na\ge \overline{V}(\AD_{[t',j']}x)$, it follows from condition (ii) that
$
\frac{dV(x(t',j'))}{dt}\le -\alpha_3(V(x(t',j')))<0,
$
which contradicts the definition of $(t',j')$.

Combining (A) and (B) above leads to
$
\overline{V}(\AD_{[t,j]}x)\le \eps+(N-1)a,
$
for all $t+j\ge T_1+\Delta+1$.

Repeating the same argument above, we can inductively show that, for $1\le k\le N$, there exists $T_k$ such that
$
\overline{V}(\AD_{[t,j]}x)\le \eps+(N-k)a,
$
for all $t+j\ge T_k+\Delta+1$. Taking $k=N$ and $T=T_N+\Delta+1$, we have established
\begin{equation}\label{eq:attraction}
V(x(t,j))\le\overline{V}(\AD_{[t,j]}x)\le \eps,
\end{equation}
for all $t+j\ge T$. Note that the choice of $T$ only depends on $\eps$ and $\eta$. Thus, combining (\ref{eq:stability}) and (\ref{eq:attraction}), we have proved that $\W$ is $\KL$ pre-asymptotically stable for $\HMD$. The existence of a $\KL$ estimate follows from standard arguments (see, e.g., \cite[Appendix C.6]{khalil1996nonlinear}.
\end{proof}

\begin{rem}
Theorem \ref{thm:stability} extends to the hybrid setting the classical stability results for functional differential equations \cite[Theorem 4.2, Chapter 5]{hale1993introduction} and more recent results on delay difference equations or inclusions (e.g., \cite{liu2007razumikhin,gielen2013tractable,gielen2013necessary}). More specifically, the proof techniques for the evolution of solutions by the flow map resemble that for classical stability results for functional differential equations. The key differences here include condition (iii), which is different from and more general than the conditions proposed for functional difference equations or inclusions (e.g., \cite{liu2007razumikhin,gielen2013tractable,gielen2013necessary}), and arguments to deal with the hybrid nature of the system, which allows the system to have multiple consecutive jumps.
\end{rem}

The following result, which first appeared in \cite{liu2014hybrid-ifac}, establishes Halanay-type inequalities \cite{halanay1966differential} for hybrid systems. It can be proved as an immediate corollary of Theorem \ref{thm:stability}.

\begin{cor}\label{cor:stability}\cite{liu2014hybrid-ifac}
Let $\HMD=(\C,\F,\D,\G)$ be a hybrid system with memory and let $\W\subset\Real^n$ be a closed set. If there exists a continuously differentiable function $V:\,\Real^n\ra\Real_{\ge 0}$, $\K_\infty$ functions $\alpha_i$ ($i=1,2$), and positive constants $\mu>q$ and $\rho<1$ such that the following hold:
\begin{itemize}
\item[(i)] $\alpha_1(\abs{\phi(0,0)}_{\W})\le V(\phi(0,0))\le \alpha_2(\abs{\phi(0,0)}_{\W})$ for all $\phi\in \C\cup \D\cup\G^+(\D)$;
\item[(ii)] $\nabla V(\phi(0,0))\cdot f\le -\mu V(\phi(0,0))+q\overline{V}(\phi)$ for all $\phi\in \C$ and $f\in\F(\phi)$;
\item[(iii)] $V(g)\le \rho\overline{V}(\phi)$ for all $\phi\in\D$ and $g\in\G(\phi)$,
\end{itemize}
where $\overline{V}(\phi)=\max_{-\Delta-1\le s+k\le 0} V(\phi(s,k))$, then $\W$ is $\KL$ pre-asymptotically stable for $\HMD$.
\end{cor}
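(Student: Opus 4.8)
The plan is to obtain Corollary~\ref{cor:stability} as a direct specialization of Theorem~\ref{thm:stability}: I would exhibit continuous comparison functions $p$, $\alpha_3$ and a function playing the role of ``$\rho$'' in Theorem~\ref{thm:stability} for which hypotheses (i)--(iii) of that theorem hold, and then invoke it. Since $\mu>q>0$, fix a constant $p_0$ with $1<p_0<\mu/q$, and set
$$
p(r):=p_0 r,\qquad \alpha_3(r):=(\mu-qp_0)r,\qquad r\ge 0,
$$
and let the linear function $r\mapsto\rho r$ serve as ``$\rho$'' in Theorem~\ref{thm:stability}. These are all continuous; $p(r)>r$ and $\rho r<r$ for every $r>0$ (the latter because $\rho<1$); $\alpha_3$ is positive definite since $\mu-qp_0>0$; and the $\K_\infty$ bounds $\alpha_1,\alpha_2$ together with hypothesis~(i) are shared verbatim with Theorem~\ref{thm:stability}.

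Next I would verify hypothesis~(ii) of Theorem~\ref{thm:stability}. Let $\phi\in\C$ satisfy $p(V(\phi(0,0)))\ge\overline V(\phi)$, i.e.\ $p_0V(\phi(0,0))\ge\overline V(\phi)$, and let $f\in\F(\phi)$. Hypothesis~(ii) of the corollary then yields
\begin{align*}
\nabla V(\phi(0,0))\cdot f
&\le -\mu V(\phi(0,0))+q\overline V(\phi)\\
&\le -\mu V(\phi(0,0))+qp_0V(\phi(0,0))\\
&= -(\mu-qp_0)V(\phi(0,0))=-\alpha_3\big(V(\phi(0,0))\big),
\end{align*}
which is exactly hypothesis~(ii) of Theorem~\ref{thm:stability}. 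Hypothesis~(iii) of Theorem~\ref{thm:stability} with the chosen linear ``$\rho$'' is precisely hypothesis~(iii) of the corollary. Finally, the quantity $\overline V(\phi)=\max_{-\Delta-1\le s+k\le 0}V(\phi(s,k))$ used here coincides with the one in Theorem~\ref{thm:stability}, because every $\phi\in\MD$ has $\dom\phi\subset\Real_{\le 0}\times\Z_{\le 0}$, so $s+k\le 0$ on $\dom\phi$ and the constraints $s+k\ge-\Delta-1$ and $-\Delta-1\le s+k\le 0$ cut out the same index set. Theorem~\ref{thm:stability} now applies and gives $\KL$ pre-asymptotic stability of $\W$ for $\HMD$.

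I do not expect any genuine obstacle here: the derivation is the classical Razumikhin device with linear gains, and the only step needing a moment of thought is selecting $p_0$ strictly between $1$ and $\mu/q$ --- this single choice simultaneously forces $p(r)>r$ (so the Razumikhin restriction in hypothesis~(ii) of Theorem~\ref{thm:stability} is nontrivial) and keeps $\mu-qp_0>0$ (so that $\alpha_3$ is genuinely positive definite). If one wished to be fully explicit, one could simply take $p_0=(1+\mu/q)/2$.
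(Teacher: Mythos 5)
Your proposal is correct and matches the paper's route: the paper states that the corollary follows as an immediate consequence of Theorem~\ref{thm:stability}, and your choice $p(r)=p_0 r$ with $1<p_0<\mu/q$, $\alpha_3(r)=(\mu-qp_0)r$, and the linear gain $r\mapsto\rho r$ is exactly the intended specialization, with each hypothesis of Theorem~\ref{thm:stability} verified correctly. The side remark that the two definitions of $\overline V$ coincide because $\dom\phi\subset\Real_{\le 0}\times\Z_{\le 0}$ is also accurate, so nothing is missing.
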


We use the following example (modified from Example 3.12 of \cite{goebel2012hybrid}) to illustrate how the results established in Theorem \ref{thm:stability} and Corollary \ref{cor:stability} can be applied.

\begin{ex}[Sampled-data systems with delayed measurements]\label{ex1}
Consider a linear sampled-data system
\begin{align}
        &\left\{\begin{aligned}
        \dot{z} & =Az+Bu\\
        \dot{u} & =0\\
        \dot{\tau} & = 1
        \end{aligned}\right\},\quad z\in\Real^n,\quad u\in\Real^m,\quad \tau\in [0,\delta],\\
        &\left\{\begin{aligned}
        z^+ & =z\\
        u^+ & =K\hat{z}\\
        \tau^+ & = 0
        \end{aligned}\right\},\quad z\in\Real^n,\quad u\in\Real^m,\quad \tau\in \set{\delta},
\label{eq:sample}
\end{align}
where $\hat{z}$ is the delayed measurement affected by some fixed sampling delay $r>0$ and $\delta>0$ is the fixed sampling period. The system corresponds to a hybrid system $\HMD=(\C,\F,\D,\G)$ with
$$
\F(\psi):=\begin{bmatrix}A\phi_z(0,0)+B\phi_u(0,0)\\
0\\
1
\end{bmatrix},
$$
$$
\G(\phi):=\begin{bmatrix}\phi_z(0,0)\\
\cup_{\{k\in\Z_{\le 0}:\,(-r,k)\in\dom\psi\}}K\phi_z(-r,k)\\
0
\end{bmatrix},
$$
$$
\C:=\set{\psi=(\phi_z,\phi_u,\tau)\in\MD:\,\tau(0,0)\in [0,\delta]},
$$
$$
\D:=\set{\psi=(\phi_z,\phi_u,\tau)\in\MD:\,\tau(0,0)=\delta},
$$
where $\psi=(\phi,\tau)\in\MD$ and $\phi=(\phi_z,\phi_u)$. 

Let $\W:=\set{0}\times\set{0}\times[0,\delta]$. Since $\psi=(\phi,\tau)$, $\phi=(\phi_z,\phi_u)$, and $\tau\in [0,\delta]$, we have, by the definitions of $\abs{\cdot}_{\W}$ and $\norm{\cdot}_{\W}$ (see Definition \ref{def:stability}), that $\abs{\psi(0,0)}_{\W}=\abs{\phi(0,0)}$ and $\norm{\psi}_{\W}=\norm{\phi}$ for all $\psi=(\phi,\tau)\in\C\cup\D\subset\MD$, where $\norm{\phi}=\sup_{(t,j)\in\dom\phi\atop -\Delta-1\le t+j\le 0}\abs{\phi(t,j)}$.
\end{ex}

Let $x=(z,u,\tau)\in\Real^{n+m+1}$, $x_1=(z,u)$, and $x_2=\tau$. Consider a Lyapunov function candidate of the form $V(x):=e^{-\sigma x_2}V_1(x)$ ($\sigma>0$) with
$
V_1(x):=W(\exp(A_f(\delta-x_2))x_1)
$
and $W(x_1):=x_1^TPx_1$, where $A_f:=\begin{bmatrix}A & B\\
0 & 0
\end{bmatrix}$ and $P$ is a positive definite symmetric matrix such that $H^TPH-P$ is negative definite with $H:=\exp(A_f\delta)A_g$, where $A_g:=\begin{bmatrix}I & 0\\
K & 0
\end{bmatrix}$. Note that the matrix $H$ captures the evolution of the variable $x_1$ at sampling times and just before jumps in the delay-free case \cite{goebel2012hybrid}. By this assumption, we know that there exists $\rho<1$ such that $W(Hx_1)\le \rho W(x_1)$.

It can be easily verified that there exist positive constants $c_1$ and $c_2$ such that
$
c_1\abs{\psi(0,0)}_\W^2\le V(\psi(0,0))\le c_2 \abs{\psi(0,0)}_\W^2.
$
In other words, condition (i) of Corollary \ref{cor:stability} is verified. We now verify condition (ii). It can be shown that
$\nabla V(\psi(0,0))\cdot \F(\psi) = -\sigma V(\psi(0,0)),$ for all $\psi\in \C$. This is because $\nabla V_1(\psi(0,0))\cdot \F(\psi) =0$. To check  condition (iii), one can verify that, for all $\psi\in\D$ and $g\in\G (\psi)$,
\begin{align*}
V(g) & \le \rho e^{\sigma \delta} V(\psi(0,0))+ M r e^{\sigma \delta} \overline{V}(\psi) \le \hat{\rho}\overline{V}(\psi),
\end{align*}
where $\hat{\rho}:=\rho e^{\sigma \delta}+ M r e^{\sigma \delta}$ and $M$ is some positive constant. In other words, if both $r$ and $\delta$ are sufficiently small, then $\hat{\rho}<1$ and $\W$ is $\KL$ pre-asymptotically stable for $\HMD$. Fig. \ref{fig:sample} shows some time-domain numerical simulations of $\HMD$ satisfying the above conditions with 
$A=\begin{bmatrix}4 & 1\\
5 & -3
\end{bmatrix}$, $B=\begin{bmatrix}-3\\-2\end{bmatrix}$, $K=[4\;-2]$, $\delta=0.2$, and $r=0.01$.

\begin{figure}[ht!]
\centering
    \includegraphics[width=0.4\textwidth]{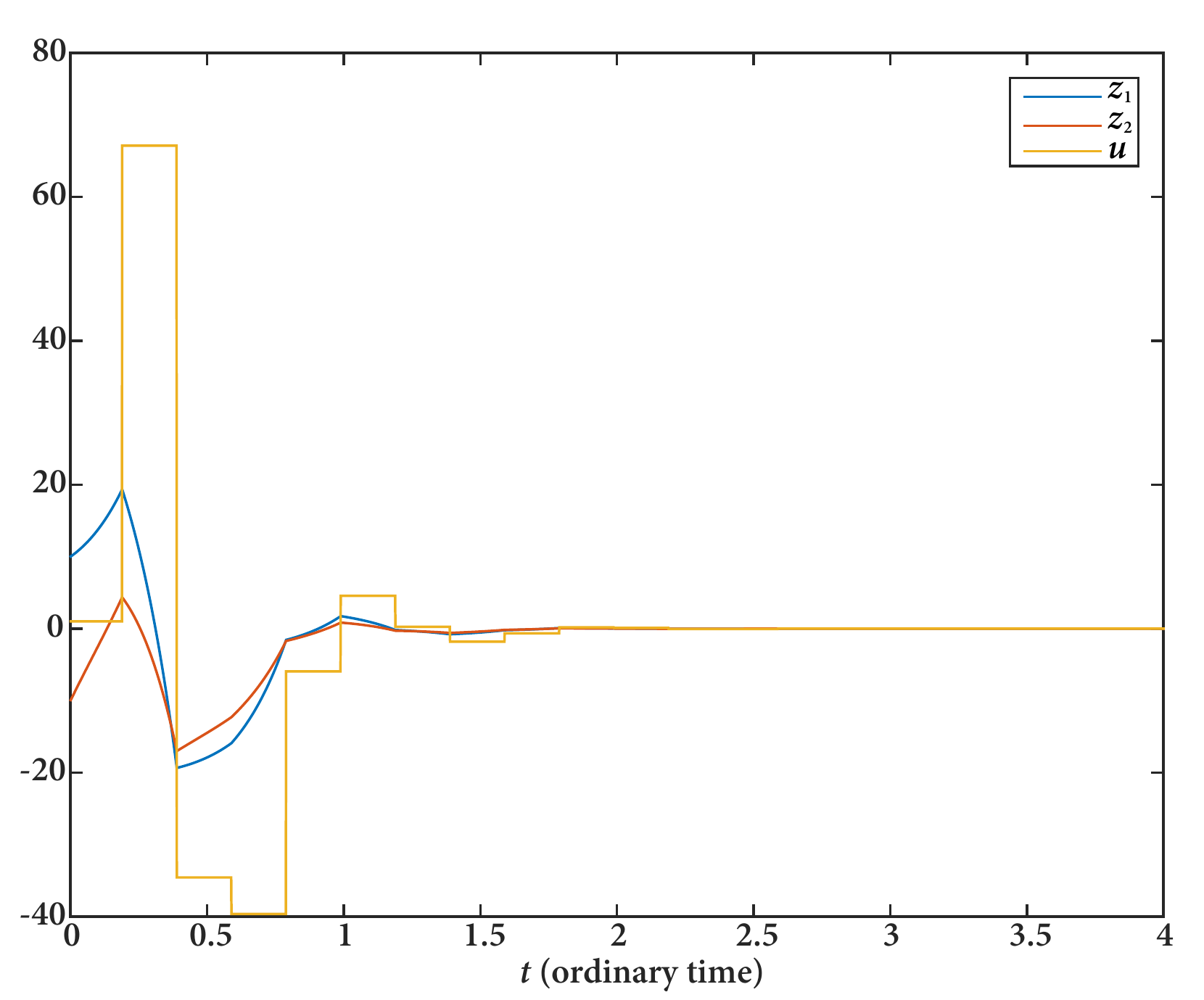}
  \caption{Simulation results for Example \ref{ex1}.}\label{fig:sample}
\end{figure}

\subsection{Sufficient conditions by Lyapunov--Krasovskii functionals}

As in functional differential equations, Lyapunov functionals can be used to formulate sufficient conditions for analyzing stability of hybrid systems with memory. Given a functional $V:\,\MD\ra\Real_{\ge 0}$, we define the upper right-hand derivative of $V$ at $\phi\in\MD$ along the solutions of $\HMD$ as follows:
\begin{equation*}%\label{eq:defV}
D^+V(\phi):=\sup_{x\in\SHMD(\phi)}\limsup_{h\rightarrow 0^+}\frac{V(\mathcal{A}_{[h,0]}^{\Delta}x)-V(\mathcal{A}_{[0,0]}^{\Delta}x)}{h}.
\end{equation*} 
The following result provides a set of such conditions, which resemble that for hybrid systems without memory. 

\begin{thm}
\label{jlthm:LK}
Let $\HMD$ be a hybrid system in $\M^{\Delta}$ and let $\W\subset\Real^n$ be a closed set. Suppose that $\F$ satisfies the following local boundedness assumption: for each $b>0$, there exists some $l>0$ such that $\abs{f}\le l$ for all $f\in \F(\phi)$ and all $\phi\in\MD\cap \C$ with $\norm{\phi}_\W\le b$. If there exists a functional $V:\,\MD\ra\Real_{\ge 0}$, $\K_\infty$ functions $\alpha_i$ ($i=1,2$), and a positive definite and continuous function $\alpha_3:\,\Real_{\ge 0}\ra\Real_{\ge 0}$ such that the following hold:
\begin{enumerate}[(i)]
\item $\alpha_1(\abs{\phi(0,0)}_{\W})\le V(\phi)\le \alpha_2(\norm{\phi}_\W)$ for all $\phi\in \C\cup \D\cup\G^+(\D)$;
\item $D^+V(\phi)\le -\alpha_3(\abs{\phi(0,0)}_{\W})$ for all $\phi\in \C$;
\item $V(\phi_g^+)-V(\phi)\le -\alpha_3(\abs{\phi(0,0)}_{\W})$ for all $\phi\in\D$ and $g\in\G(\phi)$, \end{enumerate}
then $\W$ is $\KL$ pre-asymptotically stable for $\HM$.
\end{thm}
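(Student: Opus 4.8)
The plan is to mirror, at the level of the functional $V$, the two-step (Lyapunov stability plus uniform attractivity) argument used in the proof of Theorem~\ref{thm:stability}. Fix a solution $x$ to $\HMD$ and set $v(t,j):=V(\Atj x)$. First I would establish that $v$ is non-increasing along $x$. At a flow point $(t,j)$ the shifted arc $\Atj x$ lies in $\C$ and the appropriate translate of $x$ extends to a solution in $\SHMD(\Atj x)$, so the definition of $D^+V$ together with (ii) gives $\limsup_{h\to0^+} h^{-1}\bigl(v(t+h,j)-v(t,j)\bigr)\le D^+V(\Atj x)\le -\alpha_3(\abs{x(t,j)}_\W)\le 0$; at a jump point (iii) gives $v(t,j+1)-v(t,j)=V\bigl((\Atj x)_g^+\bigr)-V(\Atj x)\le -\alpha_3(\abs{x(t,j)}_\W)\le 0$. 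Using that $t\mapsto v(t,j)$ is continuous on each flow interval (the translates $\Atj x$ vary continuously in the phase space; pinning this down is the one technical ingredient needed), the standard Dini-derivative comparison upgrades $D^+v\le 0$ to monotonicity of $v(\cdot,j)$ on flow intervals, and combined with the jump inequality this shows $v$ is non-increasing over all of $\dom_{\ge0}(x)$.

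Lyapunov stability then follows from (i): $\alpha_1(\abs{x(t,j)}_\W)\le v(t,j)\le v(0,0)\le \alpha_2(\norm{\AD_{[0,0]}x}_\W)$, hence $\abs{x(t,j)}_\W\le \alpha_1^{-1}\!\circ\alpha_2(\norm{\AD_{[0,0]}x}_\W)$ and, taking the supremum over the memory window, $\norm{\Atj x}_\W\le \mu(\norm{\AD_{[0,0]}x}_\W)$ with $\mu(r):=\max\{r,\alpha_1^{-1}(\alpha_2(r))\}$. In particular the flow data of $x$ stay in a set of the form $\{\phi\in\C:\ \norm{\phi}_\W\le \mu\}$, so the local boundedness hypothesis on $\F$ supplies a bound $\abs{\dot x(t,j)}\le l=l(\mu)$ on flow intervals; thus $t\mapsto x(t,j)$, and hence $t\mapsto\abs{x(t,j)}_\W$, is $l$-Lipschitz there.

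For attractivity I would fix $\eta>0$ and $\eps\in(0,\mu(\eta)]$ and consider any solution with $\norm{\AD_{[0,0]}x}_\W\le\eta$, so that $v\le V_0:=\alpha_2(\eta)$ and $l=l(\mu(\eta))$. Put $\eps':=\alpha_1(\eps)$ and $q:=\alpha_2^{-1}(\eps')>0$. The key observation, which resolves the usual Krasovskii difficulty that $\abs{x(t,j)}_\W$ may vanish while $v(t,j)$ is large, is that as long as $v(t,j)>\eps'$ the upper bound in (i) forces $\norm{\Atj x}_\W>q$, i.e.\ each memory window of $x$ contains a point $(s,k)$ with $\abs{x(s,k)}_\W>q$; there $\alpha_3(\abs{x(s,k)}_\W)\ge c:=\min_{r\in[q,\mu(\eta)]}\alpha_3(r)>0$. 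Using $l$-Lipschitzness of the flow and the jump inequality (iii), one shows that over every hybrid-time interval of length $\Delta+1$ the functional $v$ decreases by at least a fixed $d>0$ depending only on $q$, $c$, $l$, $\Delta$ (either the large point is a jump point and $v$ drops by at least $c$, or it is a flow point and a sub-interval of length comparable to $q/l$ on which $\abs{x(\cdot,k)}_\W\ge q/2$, or an adjacent jump, produces the decrease). Since $0\le v\le V_0$, this can persist for at most $V_0/d$ such windows, so there exist $T=T(\eps,\eta)$ and $(t^*,j^*)\in\dom_{\ge0}(x)$ with $t^*+j^*\le T$ and $v(t^*,j^*)\le\eps'$; by monotonicity $v(t,j)\le\eps'$ for all $t+j\ge t^*+j^*$, and then (i) gives $\abs{x(t,j)}_\W\le\eps$ for all $t+j\ge T$. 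Combining this uniform attractivity with the Lyapunov stability estimate and invoking the standard construction of a $\KL$ bound (as at the end of the proof of Theorem~\ref{thm:stability}) yields $\KL$ pre-asymptotic stability of $\W$ for $\HMD$.

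The step I expect to be the main obstacle is the per-window decrease estimate in the attractivity argument: because flow intervals between jumps may be arbitrarily short and jumps may occur consecutively, one must carefully account for every way the ``large'' current value in a memory window can be realized --- deep inside a long flow interval, just before or after a jump, or exactly at a jump --- and check that each case contributes a decrease of $v$ that is bounded below uniformly once $\eta$ and $\eps$ are fixed. A secondary, more routine point is justifying the continuity of $t\mapsto V(\Atj x)$ on flow intervals that is needed to pass from the Dini-derivative bound (ii) to genuine monotonicity of $v$.
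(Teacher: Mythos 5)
Your proposal is correct and follows essentially the same route as the paper's proof: monotonicity of $V(\Atj x)$ from (ii)--(iii), the Lipschitz bound on flows from the local boundedness of $\F$, and a per-memory-window decrease of $V$ by a fixed amount (flow-interval versus nearby-jump dichotomy around a point where $\abs{x}_\W$ exceeds $q=\alpha_2^{-1}(\alpha_1(\eps))$, which is exactly the paper's $\delta$), leading to uniform attractivity and a standard $\KL$ construction. Even the step you flag as the main obstacle corresponds precisely to the paper's case analysis with the interval $[t_k,t_k+\tfrac{\delta}{2L}]$ versus a jump at some $t_k'$, so no genuinely different ingredient is involved.
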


\begin{proof}
Let $x\in\SHMD(\phi)$ for some $\phi\in\MD$. Pick any $(t,j)\in\dom_{\ge 0} (x)$ and $(s,k)\in\dom_{\ge 0} (x)$ such that $(s,k)\preceq(t,j)$. Here $(s,k)\preceq(t,j)$ means $s\leq t$ and $k\leq j$. Let $s=:t_k\le t_{k+1}\le \cdots\le t_{j+1}:=t$ satisfy
$$
\dom_{\ge 0} (x)\cap ([s,t]\times\set{k,\cdots,j})= \bigcup_{i=k}^j[t_i,t_{i+1}]\times{i}.
$$
For each $i\in\set{0,\cdots,j}$ and almost all $\theta\in[t_i,t_{j+1}]$, we have $\A_{[\theta,i]}x\in\C$ and
$$
D^+V(\A_{[\theta,i]}x)\le -\alpha_3(\abs{x(\theta,i)}_{\W}),
$$
from condition (ii). Integrating this gives
\begin{equation}\label{eq:cts}
V(\A_{[t_{i+1},i]}x)-V(\A_{[t_{i},i]}x)\le -\int_{t_{i}}^{t_{i+1}}\alpha_3(\abs{x(\theta,i)}_{\W})d\theta,
\end{equation}
for all $i\in\set{k,\cdots,j}$. Moreover, for each $i\in\set{0,\cdots,j}$, we have $\A_{[t_i,i-1]}x\in\D$ and
\begin{equation}\label{eq:dts}
V(\A_{[t_{i},i]}x)-V(\A_{[t_{i},i-1]}x)\le -\alpha_3(\abs{x(t_i,i-1)}_{\W}),
\end{equation}
for all $i\in\set{k,\cdots,j}$. Combining (\ref{eq:cts}) and (\ref{eq:dts}) gives
\begin{align*}
&V(\A_{[t,j]}x)-V(\A_{[s,k]}x)\\
&\le -\sum_{i=k}^j\int_{t_i}^{t_{i+1}}\alpha_3(\abs{x(\theta,i)}_{\W})ds-\sum_{i=k+1}^j\alpha_3(\abs{x(t_i,i-1)}_{\W}).
\end{align*}
Since $\alpha_3$ is positive definite, the above inequality shows that $V(\A_{[t,j]}x)$ is non-increasing. Furthermore, letting $(s,k)=(0,0)$, together with condition (i), implies that
\begin{align}\label{eq:uniform}
\alpha_1(\abs{x(t,j)}_{\W})&\le V(\A_{[0,0]}x)\le V(\A_{[0,0]}x)\notag\\
&\le\alpha_2(\abs{x(0,0)}_{\W})\le\alpha_2(\norm{\phi}_{\W}).
\end{align}

Pick any $\eps>0$ and $\eta>0$. Let $\delta=\alpha^{-1}_2\circ\alpha_1(\eps)$. Then it follows from (\ref{eq:uniform}) that, if $\norm{\phi}_{\W}\le\delta$, then $\abs{x(t,j)}_{\W}\le\eps$ for all $(t,j)\in\dom x$.

Fixed any $\eta>0$ and consider any solution $x$ to $\HMD$ with $\norm{\phi}_{\W}\le \eta$. It follows from the above inequality that $V(x(t,j))\le \alpha_2(\eta)$ for all $(t,j)\in\dom x$. Furthermore, from the boundedness assumption on $\F$, there exists a constant $L$ such that, for all $j$ and almost all $t$ such that $(t,j)\in\dom_{\ge 0}(x)$, we have $\abs{\dot{x}(t,j)}\le L$. We further show that for all $\eps>0$, there exists some $T>0$ such that, for all $(t,j)\in\dom x$ with $t+j\ge T$, 
\begin{equation}\label{eq:attraction2}
V(x(t,j))\le \eps. 
\end{equation}

Suppose that $x$ to $\HMD$ with $\norm{\A_{[0,0]}x}_{\W}\le \eta$ is such that $\norm{\A_{[t,j]}x}_{\W}\ge\delta$ for all $(t,j)\in\dom_{\ge 0} (x)$ with $t+j\le T$. We can pick up a sequence of pairs $(t_k,j_k)\in\dom_{\ge 0} (x)$ such that
$
\abs{x(t_k,j_k)}_{\W}\ge\delta.
$
It follows that either there exists an interval $[t_k,t_k+\frac{\delta}{2L}]$ such that $[t_k,t_k+\frac{\delta}{L}]\times\set{j_k}\subset\dom_{\ge 0}(x)$ or there exists $t_k'\in [t_k,t_k+\frac{\delta}{L}]$ such that both $(t_k',j_k)\in\dom_{\ge 0}(x)$ and $(t_k',j_k+1)\in\dom_{\ge 0}(x)$ hold. In the first case, we have
$
\abs{x(s,j_k)}_{\W}\ge\frac{\delta}{2}
$
for all $s\in\big[t_k,t_k+\frac{\delta}{2L}\big].$ 
Consequently, from condition (ii), we have
\begin{equation}\label{eq:flow}
V(\A_{[t_k+\frac{\delta}{2L},j_k]}x)-V(\A_{[t_k,j_k]}x)\le -\alpha_3\Big(\frac{\delta}{2}\Big)\frac{\delta}{2L}.
\end{equation}
In the latter case, we have $\abs{x(t_k',j_k)}_{\W}\ge\frac{\delta}{2}$ and it follows from condition (iii) that
\begin{equation}\label{eq:jump}
V(\A_{[t_k',j_k+1]}x)-V(\A_{[t_k',j_k]}x)\le -\alpha_3\Big(\frac{\delta}{2}\Big).
\end{equation}
By taking a large $L$, if necessary, we can assume that $\frac{\delta}{2L}\le 1$.

Let $k_0$ be the smallest integer such that
$$\alpha_2(\eta)-\alpha_3\Big(\frac{\delta}{2}\Big)\frac{\delta}{2L}(k_0-1)<0.$$
Pick some $T>k_0(\Delta+2)$. Unless we have $\sup\set{t+j:\,(t,j)\in\dom_{\ge 0}(x)}<T$, we can pick the sequence of pairs $(t_k,j_k)\in\dom_{\ge 0} (x)$ to satisfy that $(t_k+\frac{\delta}{2L},j_k)\preceq(t_{k+1},j_{k+1})$ and $(t_k',j_k+1)\preceq(t_{k+1},j_{k+1})$ for all $k\le k_0-1$. Since $V(\A_{[t,j]}x)$ is non-increasing, it follows from combining (\ref{eq:flow}) and (\ref{eq:jump}) that
\begin{align*}
V(\A_{[t_{k_0},j_{k_0}]}x)&\le V(\A_{[0,0]}x)-\alpha_3\Big(\frac{\delta}{2}\Big)\frac{\delta}{2L}(k_0-1)\\
&\le\alpha_2(\eta)-\alpha_3\Big(\frac{\delta}{2}\Big)\frac{\delta}{2L}(k_0-1)<0,
\end{align*}
which is a contradiction. In view of (\ref{eq:uniform}) and (\ref{eq:attraction2}), the rest of the proof is similar to that for Theorem \ref{thm:stability}.

\end{proof}

\begin{rem}
A special version of Lyapunov--Krasovskii theorem for hybrid systems with delays was proved in \cite[Proposition 1]{banos2014delay} in the context of reset control systems, where the reset map acts only on the controller state and the emphasis on the delay is in continuous time. Moreover, a dwell-time condition was assumed there. Theorem \ref{jlthm:LK} covers the general case of hybrid systems with delays, where the delay can be in both continuous and discrete time.
\end{rem}

We use a simple example to illustrate the above theorem.

\begin{ex}[Time-delay systems with jumps]
Consider a hybrid system $\HMD=(\C,\F,\D,\G)$ with
$$
\F(\psi):=\begin{bmatrix}a\phi(0,0)+b\phi(-r,k(r)),\\
1
\end{bmatrix}
$$
$$
\G(\psi):=\begin{bmatrix}\rho\phi(0,0),\\
0
\end{bmatrix}
$$
$$
\C:=\set{\psi=(\phi,\tau)\in\MD:\,\tau(0,0)\in [0,\delta]},
$$
$$
\D:=\set{\psi=(\phi,\tau)\in\MD:\,\tau(0,0)=\delta},
$$
where $a$, $b$, and $\rho$ are scalar constants and $k(s)=\max\set{k:\,(s,k)\in\dom \phi}$.
\end{ex}

Let $\W:=\set{0}\times[0,\delta]$. Since $\psi=(\phi,\tau)$ and $\tau\in [0,\delta]$, we have, by the definitions of $\abs{\cdot}_{\W}$ and $\norm{\cdot}_{\W}$ (see Definition \ref{def:stability}), that $\abs{\psi(0,0)}_{\W}=\abs{\phi(0,0)}$ and $\norm{\psi}_{\W}=\norm{\phi}$ for all $\psi=(\phi,\tau)\in\C\cup\D\subset\MD$, where $\norm{\phi}=\sup_{(t,j)\in\dom\phi\atop -\Delta-1\le t+j\le 0}\abs{\phi(t,j)}$.

It is easy to see that the generalized hybrid system formulation above corresponds to the delay differential equation
$
\dot{x}(t)=ax(t)+bx(t-r)
$
subject to the reset map $x^+=\rho x$ every $\delta$ unit of time, where $x^+$ denotes the state after applying the reset map at $t$.

With $\psi=(\phi,\tau)\in\MD$, let
$$
V(\psi)=\abs{\phi(0,0)}^2e^{-\sigma\tau(0,0)}+\mu\int_{-r}^0\abs{\phi(s,k(s))}^2ds,
$$
where $\sigma$ and $\mu\ge 0$ are constants to be determined. It is easy to see that condition (i) in Theorem \ref{jlthm:LK} is satisfied with $\alpha_1(s)=s^2 e^{-\abs{\sigma}\delta}$ and $\alpha_2(s)=s^2 (e^{\abs{\sigma}\delta}+r\mu)$. If $\psi\in \C$, it follows that
\begin{align}
D^+V(\psi)&\le 2\phi(0,0)e^{-\sigma\tau(0,0)}(a\phi(0,0)+b\phi(-r,k(-r)))\notag\\
&\qquad-\sigma\phi^2(0,0)e^{-\sigma\tau(0,0)}\notag\\
&\qquad+\mu\phi^2(0,0)-\mu\phi^2(-r,k(-r)).\label{eq:flowV}
\end{align}
If $\psi\in \D$, it follows that
\begin{align}
V(\phi^+_g)-V(\psi)&\le (\rho-e^{-\sigma\delta})\abs{\phi(0,0)}^2,\label{eq:jumpV}
\end{align}
for $g\in\G(\psi)$. Consider two cases:\\
\textbf{(I) $a<0$ and $\rho>1$:} This corresponds to the case when the flow dynamics are stable, whereas the jump dynamics are not. Fix some $\sigma<0$. The inequality (\ref{eq:flowV}) would imply condition (ii) if
$
2ae^{-\sigma\delta}-\sigma e^{-\sigma\delta}+\mu<0
$
and
$
-(2ae^{-\sigma\delta}-\sigma e^{-\sigma\delta}+\mu)\mu>b^2e^{-2\sigma\delta}
$
hold simultaneously. Moreover, the inequality (\ref{eq:flowV}) would imply condition (iii) if
$
\rho<e^{-\sigma\delta}.
$
It is easy to check that, if $\abs{b}<a$, we can choose $\delta>0$ sufficiently large such that the above three inequalities always hold. Therefore, by Theorem \ref{jlthm:LK}, $\W$ is $\KL$ pre-asymptotically stable for $\HMD$ if $\delta$ is sufficiently large.\\
\textbf{(II) $a>0$ and $\rho<1$:} This corresponds to the case when the flow dynamics are unstable, whereas the jump dynamics are stable. Suppose $\sigma>0$. Similarly, the inequality (\ref{eq:flowV}) would imply condition (ii) if $2a-\sigma+\mu<0$ and $-(2a-\sigma+\mu)\mu>b^2$ hold simultaneously. With $\mu=2b$, we can choose $\sigma>2(a+\abs{b})$ such that the above inequality holds. Moreover, the inequality (\ref{eq:flowV}) would imply condition (iii) if
$
\rho<e^{-\sigma\delta}.
$
This can be done by choosing $\delta$ sufficiently small according to a chosen value of $\sigma$. Therefore, by Theorem \ref{jlthm:LK}, $\W$ is $\KL$ pre-asymptotically stable for $\HMD$ if $\delta$ is sufficiently small.

\section{Conclusions}

In this note, we have investigated asymptotic stability of hybrid systems with memory via generalized solutions. While the motivation for considering generalized solutions lies in the needs to address robustness of asymptotic stability and well-posedness for such systems, the focus of this note is on asymptotic stability analysis using Lyapunov-based methods. We have established two sets of Lyapunov-based sufficient conditions for the asymptotic stability of hybrid systems with memory, one based on Lyapunov--Razumikhin techniques and the other Lyapunov--Krasovskii functionals. We have demonstrated the effectiveness of these techniques using two examples.

\ifCLASSOPTIONcaptionsoff
  \newpage
\fi

\bibliographystyle{IEEEtran}
\bibliography{hybrid}

% Generated by IEEEtran.bst, version: 1.13 (2008/09/30)
\begin{thebibliography}{10}
\providecommand{\url}[1]{#1}
\csname url@samestyle\endcsname
\providecommand{\newblock}{\relax}
\providecommand{\bibinfo}[2]{#2}
\providecommand{\BIBentrySTDinterwordspacing}{\spaceskip=0pt\relax}
\providecommand{\BIBentryALTinterwordstretchfactor}{4}
\providecommand{\BIBentryALTinterwordspacing}{\spaceskip=\fontdimen2\font plus
\BIBentryALTinterwordstretchfactor\fontdimen3\font minus
  \fontdimen4\font\relax}
\providecommand{\BIBforeignlanguage}[2]{{%
\expandafter\ifx\csname l@#1\endcsname\relax
\typeout{** WARNING: IEEEtran.bst: No hyphenation pattern has been}%
\typeout{** loaded for the language `#1'. Using the pattern for}%
\typeout{** the default language instead.}%
\else
\language=\csname l@#1\endcsname
\fi
#2}}
\providecommand{\BIBdecl}{\relax}
\BIBdecl

\bibitem{chen2009input}
W.-H. Chen and W.~X. Zheng, ``Input-to-state stability and integral
  input-to-state stability of nonlinear impulsive systems with delays,''
  \emph{Automatica}, vol.~45, no.~6, pp. 1481--1488, 2009.

\bibitem{liu2001uniform}
X.~Liu and G.~Ballinger, ``Uniform asymptotic stability of impulsive delay
  differential equations,'' \emph{Comput. Math. Appl.}, vol.~41, pp. 903--915,
  2001.

\bibitem{liu2006stability}
X.~Liu and J.~Shen, ``Stability theory of hybrid dynamical systems with time
  delay,'' \emph{IEEE Trans. Automat. Control}, vol.~51, pp. 620--625, 2006.

\bibitem{liu2011generalized}
J.~Liu, X.~Liu, and W.-C. Xie, ``Generalized invariance principles for switched
  delay systems,'' \emph{IMA J. Math. Control Inform.}, vol.~28, pp. 19--39,
  2011.

\bibitem{liu2011input}
------, ``Input-to-state stability of impulsive and switching hybrid systems
  with time-delay,'' \emph{Automatica}, vol.~47, pp. 899--908, 2011.

\bibitem{yan2008stability}
P.~Yan and H.~{\"O}zbay, ``Stability analysis of switched time delay systems,''
  \emph{SIAM J. Control Optim.}, vol.~47, pp. 936--949, 2008.

\bibitem{yuan2003uniform}
R.~Yuan, Z.~Jing, and L.~Chen, ``Uniform asymptotic stability of hybrid
  dynamical systems with delay,'' \emph{IEEE Trans. Automat. Control}, vol.~48,
  pp. 344--348, 2003.

\bibitem{goebel2004hybrid}
R.~Goebel, J.~Hespanha, A.~R. Teel, C.~Cai, and R.~G. Sanfelice, ``Hybrid
  systems: generalized solutions and robust stability,'' in \emph{NOLCOS},
  2004, pp. 1--12.

\bibitem{goebel2012hybrid}
R.~Goebel, R.~G. Sanfelice, and A.~R. Teel, \emph{Hybrid Dynamical Systems:
  Modeling, Stability, and Robustness}.\hskip 1em plus 0.5em minus 0.4em\relax
  Princeton University Press, 2012.

\bibitem{sanfelice2008generalized}
R.~G. Sanfelice, R.~Goebel, and A.~R. Teel, ``Generalized solutions to hybrid
  dynamical systems,'' \emph{ESAIM Control Optim. Calc. Var.}, vol.~14, pp.
  699--724, 2008.

\bibitem{liu2012generalized}
J.~Liu and A.~R. Teel, ``Generalized solutions to hybrid systems with delays,''
  in \emph{Proc. of CDC}, 2012, pp. 6169--6174.

\bibitem{liu2014hybrid-ifac}
------, ``Hybrid systems with memory: modelling and stability analysis via
  generalized solutions,'' in \emph{IFAC World Congress}, 2014.

\bibitem{liu2014hybrid-siam}
------, ``Hybrid systems with memory: existence and well-posedness of
  generalized solutions,'' \emph{SIAM J. on Control and Optimization}, 2014,
  submitted.

\bibitem{hale1993introduction}
J.~K. Hale and S.~M.~V. Lunel, \emph{Introduction to Functional Differential
  Equations}.\hskip 1em plus 0.5em minus 0.4em\relax Springer, 1993, vol.~99.

\bibitem{liu2007razumikhin}
B.~Liu and H.~J. Marquez, ``Razumikhin-type stability theorems for discrete
  delay systems,'' \emph{Automatica}, vol.~43, no.~7, pp. 1219--1225, 2007.

\bibitem{gielen2013tractable}
R.~H. Gielen, A.~R. Teel, and M.~Lazar, ``Tractable {R}azumikhin-type
  conditions for input-to-state stability analysis of delay difference
  inclusions,'' \emph{Automatica}, vol.~49, no.~2, pp. 619--625, 2013.

\bibitem{gielen2013necessary}
R.~H. Gielen, M.~Lazar, and S.~V. Rakovic, ``Necessary and sufficient
  {R}azumikhin-type conditions for stability of delay difference equations,''
  \emph{Automatic Control, IEEE Transactions on}, vol.~58, no.~10, pp.
  2637--2642, 2013.

\bibitem{goebel2006solutions}
R.~Goebel and A.~R. Teel, ``Solutions to hybrid inclusions via set and
  graphical convergence with stability theory applications,''
  \emph{Automatica}, vol.~42, pp. 573--587, 2006.

\bibitem{razumikhin1956stability}
B.~Razumikhin, ``On the stability of systems with a delay,'' \emph{Prikl. Mat.
  Mekh}, vol.~20, no.~4, pp. 500--512, 1956.

\bibitem{khalil1996nonlinear}
H.~K. Khalil, \emph{Nonlinear Systems}.\hskip 1em plus 0.5em minus 0.4em\relax
  Prentice Hall, 1996.

\bibitem{halanay1966differential}
A.~Halanay and A.~Halanay, \emph{Differential equations: Stability,
  oscillations, time lags}.\hskip 1em plus 0.5em minus 0.4em\relax Academic
  Press New York, 1966, vol.~23.

\bibitem{banos2014delay}
A.~Ba{\~n}os, F.~P. Rubio, S.~Tarbouriech, and L.~Zaccarian,
  ``Delay-independent stability via reset loops,'' in \emph{Low-Complexity
  Controllers for Time-Delay Systems}.\hskip 1em plus 0.5em minus 0.4em\relax
  Springer, 2014, pp. 111--125.

\end{thebibliography}
\end{document}